\newtheorem{lemma}{Lemma}
\newtheorem{proposition}[lemma]{Proposition}
\newtheorem{theorem}[lemma]{Theorem}
\newtheorem{dmydefs}[lemma]{Definition}
\newtheorem{problem}{Problem}
\begin{document}

\title{An infinite antichain of planar tanglegrams
}


\author{\'Eva Czabarka}
\address{\'E. Czabarka\\
              Department of Mathematics\\
              University of South Carolina, Columbia SC, USA \\
              Visiting Professor, Department of Mathematics and Applied Mathematics\\
              University of Johannesburg, South Africa}
              \email{czabarka@math.sc.edu}
              
\author{Stephen J. Smith}
\address{ S.J. Smith \\
               Department of Mathematics\\
              University of South Carolina, Columbia SC, USA}           
               \email{sjs8@email.sc.edu}
\author{ L\'aszl\'o A. Sz\'ekely}
\address{L.A. Sz\'ekely\\
               Department of Mathematics\\
              University of South Carolina, Columbia SC, USA \\
              Visiting Professor, Department of Mathematics and Applied Mathematics\\
              University of Johannesburg, South Africa}
              \email{szekely@math.sc.edu}           

\thanks{The last  author was  supported in part by the National Science Foundation  contract  DMS  1600811.}
\keywords{binary tree, caterpillar, induced subtree, tanglegram, planar tanglegram, induced subtanglegram, permutation, permutation pattern, partial order, well-quasi-ordering, antichain}
 \subjclass{Primary 05C05; secondary 05C10, 06A06, 92B10}

\begin{abstract}
Contrary to the expectation arising from  the tanglegram Kuratowski theorem of \'E.~Czabarka,  L.~A. Sz\'ekely and  S.~Wagner
[{\textit SIAM J. Discrete Math.}  31(3): 1732--1750, (2017)], we construct an infinite antichain of planar tanglegrams with respect to the induced subtanglegram partial order. 
R.E. Tarjan, R. Laver,
D.A. Spielman and M. B\'ona, and possibly others,
showed that  the partially ordered set of finite permutations ordered by deletion of entries contains an infinite antichain,
 i.e. there exists an infinite collection of permutations, such that  none of them contains another as a pattern. Our construction adds a twist to the construction
 of Spielman and B\'ona [{\textit Electr. J. Comb.} Vol. 7. N2.]
\end{abstract}

\maketitle

\section{Introduction}
\label{intro}
Informally, a
{\textit tanglegram} is a specific kind of graph, consisting of two rooted binary trees of the same size and a perfect matching joining their leaves. Tanglegrams are drawn 
under specific rules, such drawings are called {\textit tanglegram layouts}. (Formal definitions are postponed to Section~\ref{sec:defs}.) The {\textit tangle crossing number} of a tanglegram is the minimum crossing number (i.e. the minimum number of unordered crossing edge-pairs) among its layouts. The tanglegram is {\textit planar}, if it has a layout without crossings.  
Tanglegrams play a major role in phylogenetics, especially in the theory of cospeciation \cite{page}. The first binary tree is the phylogenetic tree of hosts, while the second binary tree is the phylogenetic tree of their parasites, e.g. gopher and louse 
\cite{HafnerNadler}. The matching connects the host with its parasite.
 The tanglegram  crossing number  has been related to the  number of times  parasites switched hosts \cite{HafnerNadler},
or, working with gene trees instead of phylogenetic trees, to the 
number of horizontal gene transfers (\cite{Burt}, pp. 204--206).
Tanglegrams are well-studied objects in phylogenetics and computer science (see e.g. 
\cite{bansal,billey,buchin,calamoneri,induci,fernau,Henzinger,KonvWag,Matsen,scornavacca}).

Czabarka, Sz\'ekely and Wagner \cite{tanglekurat} discovered a Kuratowski-type theorem that characterized planar tanglegrams by two excluded induced subtanglegrams. They asked 
\begin{problem} \label{open} Are there similar characterizations
\begin{enumerate}[label={\upshape (\roman*)}]
\item\label{part:first} for tanglegrams with tangle crossing number at most $k$? 
\item  for tanglegrams that have a layout without $k$ pairwise crossing edges? 
\end{enumerate}
\end{problem}
Were the induced subtanglegram partial order a well-quasi-ordering, the answer to 
these questions would immediately be  in the affirmative, delivering a number of algorithmic consequences. To be a well-quasi-ordering, there should not be an infinite antichain 
in the well-founded partially order.

Whether   a well-founded partially ordered set  has an infinite antichain has been well studied 
(e.g. \cite{higman,kruskal,robsey1,robsey2}). In particular, Kruskal's Tree Theorem \cite{kruskal} would give one hope that the induced subtanglegram relation would be a 
well-quasi-ordering as well. However, tanglegrams, where the two trees are caterpillars,
are closely related  to permutations and permutation patterns (see Section~\ref{sec:defs}).
Laver~\cite{laver}, Pratt~\cite{pratt}, Tarjan~\cite{tarjan}, and Speilman and B\'ona~\cite{bona}  constructed infinite antichains of permutations 
for the partial order defined by permutation patterns.

While the antichain of permutations in  \cite{bona}  does not immediately yield an infinite antichain of tanglegrams (in fact, it defines a chain, as will be explained at the end of Section~\ref{sec:construct}), when we turn these permutations ``upside down" (i.e. in a permutation of $[n]$ we replace every entry $j$ by $n+1-j$), we manage to obtain an infinite antichain of tanglegrams with respect to the induced tanglegram relation. Furthermore, the elements of the antichain are planar tanglegrams
(shown in Section~\ref{sec:planar}), 
making Problem~\ref{open} even more intriguing. An algorithmic consequence of a positive answer to Problem~\ref{open}~\ref{part:first} would be fixed-parameter tractability of computing the
tanglegram crossing number, a result that is already known \cite{buchin}. 

The authors wish to thank Stephan Wagner and Mikl\'os B\'ona  for helpful discussions.


\section{Definitions and basic setup}\label{sec:defs}

As customary,  $[n]$ denotes the set $\{1,2,3,\ldots,n\}$, $S_n$ denotes the symmetric group acting on $[n]$. For $\pi\in S_n$, we  use the notation $\pi=(a_1,\ldots,a_n)$, if $\pi(i)=a_i$ for all $i\in[n]$.

\begin{dmydefs} A  {\textit rooted tree} $T$ is a tree with a distinguished vertex called the {\textit root}. Given a vertex $v$ in a rooted tree, and a neighbor $y$ of $v$, $y$ is the {\textit parent} of $v$,  if $y$ is on
the path from $v$ to the root; otherwise $y$ is a {\textit child} of $v$. The rooted tree $T$ is {\textit binary}, if every vertex has zero or two children.
\end{dmydefs}

\begin{dmydefs} 
For $n\ge 2$, the {\textit rooted caterpillar} $C_n$ with $n$ leaves is the rooted binary tree, whose $n-2$ internal vertices form a path, and the root is an endvertex of this path.
\end{dmydefs}

Note that $C_n$ has two leaves at distance $n-1$ from the root, and for all $i$ $( 1\le i\le n-2)$  it has  precisely one leaf at distance $i$ from the root. These properties 
characterize $C_n$.

\begin{dmydefs} Given a  rooted binary tree $T$ with root $r$ and a non-empty subset $B$ of its leaves, the {\textit rooted binary subtree induced by $B$},  $T[B]$, is obtained as follows:
Take the smallest subtree $T'$  of $T$ containing all vertices of $B$, and designate the vertex $\rho\in V(T')$ closest to $r$ in $T$ as the root of $T'$. 
This  rooted tree is not necessarily  binary---suppress  all 
 vertices  of     degree $2$    (except  $\rho$) in $T'$ to make it binary. The resulting rooted binary tree is  $T[B]$.
\end{dmydefs}

\begin{dmydefs} A {\textit tanglegram} of size $n$ is an ordered triplet $(T_1,T_2,M)$, where $T_1$ and $T_2$ are rooted binary trees with $n$ leaves each, and $M$ is a perfect matching between the two leaf sets. $T_1$ is  called the {\textit left tree} and $T_2$ is the {\textit right tree} of the tanglegram. Two tanglegrams are considered   the same, if there is a graph isomorphism between them, which  fixes the roots of the left tree and the right tree.
\end{dmydefs}

\begin{dmydefs} Given a tanglegram $\mathcal{T}=(T_1,T_2,M)$ and an $\emptyset\not= M'\subseteq M$, the {\textit subtanglegram induced by $M'$} is
$\mathcal{T}[M']=(T_1[B_1],T_2[B_2],M')$, where $B_i$ is the set of leaves in $T_i$ matched by $M'$. We say that $\mathcal{T}^*$ is an {\textit induced subtanglegram of $\mathcal{T}$} (in notation: $\mathcal{T}^*\preceq\mathcal{T}$), if there is an $M^*\subseteq M$ such that $\mathcal{T}^*=\mathcal{T}[M^*]$.
\end{dmydefs}
Note that $\preceq$ is a partial order on the set of tanglegrams, and $\preceq$ is well-founded, i.e. it has no infinite strictly decreasing chains.

\begin{dmydefs}
Given a tanglegram $\mathcal{T}=(T_1,T_2,M)$, where the root of $T_i$ is $r_i$, the  \textit{multiset of distance pairs}, $\mathbb{D}(\mathcal{T})$, contains exactly $k$ copies of $(d_1,d_2)$ if and only if there exists exactly $k$ matching
edges of the form $(x_1,x_2)\in M$ such that $x_i$ is a leaf of $T_i$ at distance $d_i$ from $r_i$.
\end{dmydefs}

From now on  we restrict ourselves to tanglegrams, in which both the left and right trees are rooted caterpillars.
Note that in this case, if two tanglegrams have the same distance  pair multiset, then they are the same.

\begin{dmydefs} \label{convention}
For $n\ge 2$, the {\textit distance labeling} of the leaves of $C_n$ is the following:  for each $i$, $1\le i\le n-2$, the leaf labeled $i$ is the one at distance $i$ from the root, and  the two leaves at distance $n-1$ are labeled arbitrarily by $n-1$ and $n$.\\
For $n\ge 2$ and  $\pi\in S_n$, the {\textit catergram} $\mathcal{T}_{\pi}$ is the tanglegram $(C_n,C_n,M_{\pi})$, where $M_{\pi}$ is defined as follows.
Use the distance labeling of the leaves of both caterpillars,
match the leaf on the left tree labeled $i$  with the leaf on the right tree labeled $j$ if and only if $\pi(i)=j$.
\end{dmydefs}

Note that every tanglegram, in which both the  left  tree and right tree are rooted caterpillars, does arise as a catergram, but the permutation that defines it is not unique.

\begin{dmydefs} Assume $n\ge 2$. Given a $\pi=(a_1,\ldots,a_n)\in S_n$, we  define the (not necessarily different) permutations $\widehat{\pi}$, $\widetilde{\pi}$ as
\begin{equation*} \widehat{\pi}(i)=
	\begin{cases} 
		a_i, & \hbox{ if }  i\le n-2 \\
		a_n, & \hbox{ if }  i=n-1\\
      		a_{n-1}, & \hbox{ if }   i=n
   	\end{cases}\text{\ \ \ and\ \ \ }
\widetilde{\pi}(i)=
	\begin{cases} 
		a_i, & \hbox{ if } a_i\notin\{n-1,n\} \\
      		n-1, & \hbox{ if } a_i=n\\
		n,  & \hbox{ if } a_i=n-1;
   	\end{cases},	
\end{equation*}
and finally let  $\pi^*=\widehat{(\widetilde{\pi})}$. We define the set $\overline{\pi}=\{\pi,\widehat{\pi},\widetilde{\pi},\pi^*\}$.
\end{dmydefs}

\begin{proposition}\label{claim:hattilde} The following facts are obvious for any $\pi=(a_1,\ldots,a_n)$: 
\begin{enumerate}[label={\upshape (\alph*)}]
\item We have  $\mathbb{D}(\mathcal{T}_{\pi})=\{(1, a_1^*), (2, a_2^*),\dots, (n-1, a_{n-1}^*), (n-1, a_n^*)\}$, where
\begin{equation*}a_i^*=
	\begin{cases} 
		a_i, & a_i<n \\
      		n-1, & a_i=n.
   	\end{cases}
\end{equation*}
\item  $\widehat{(\widetilde{\pi})}=\widetilde{(\widehat{\pi})}$, $\pi=\widehat{(\widehat{\pi})}=\widetilde{(\widetilde{\pi})}$,
 and $\pi\notin\{\widehat{\pi}, \widetilde{\pi}\}$.
 \item  $\rho\in \overline{\pi}$ iff $\overline{\rho}=\overline{\pi}$.
\item $\widehat{\pi}=\widetilde{\pi}$ iff $\{a_{n-1},a_n\}=\{n-1,n\}$ iff $\pi=\pi^*$; consequently $|\overline{\pi}|\in\{2,4\}.$
\item\label{part:equal} $\mathcal{T}_{\rho}=\mathcal{T}_{\pi}$ iff $\mathbb{D}(\mathcal{T}_{\rho})=\mathbb{D}(\mathcal{T}_{\pi})$ iff $\rho\in\overline{\pi}$.
\end{enumerate}
\end{proposition}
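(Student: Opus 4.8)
The organizing idea is that $\widehat{\cdot}$ permutes the \emph{positions} $n-1,n$ of the one-line notation while $\widetilde{\cdot}$ permutes the \emph{values} $n-1,n$, so one acts on the domain and the other on the codomain. For part~(a) I would just evaluate distances in the distance labeling: in each caterpillar the leaf labeled $\ell$ lies at distance $\ell$ when $\ell\le n-2$, and the two leaves labeled $n-1,n$ both lie at distance $n-1$; hence the matching edge out of left-leaf $i$ reaches a right-leaf at distance $a_i^*$, and the two left-leaves at distance $n-1$ are those labeled $n-1,n$, yielding the stated multiset. Part~(b) is then immediate: since $\widehat{\cdot}$ and $\widetilde{\cdot}$ act on disjoint data they commute and are visibly involutions, giving $\widehat{(\widehat{\pi})}=\widetilde{(\widetilde{\pi})}=\pi$; and $\pi\neq\widehat{\pi}$ because $a_{n-1}\neq a_n$, while $\pi\neq\widetilde{\pi}$ because the two distinct values $n-1,n$ occur at distinct positions that $\widetilde{\cdot}$ interchanges.

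By~(b) the operations $\widehat{\cdot},\widetilde{\cdot}$ are commuting involutions, so together with the identity and their composite they form a group $G$ acting on $S_n$, and $\overline{\pi}=\{\pi,\widehat{\pi},\widetilde{\pi},\pi^*\}$ is exactly the $G$-orbit of $\pi$; since orbits partition $S_n$, part~(c) follows. For part~(d) I would first prove $\widehat{\pi}=\widetilde{\pi}\iff\{a_{n-1},a_n\}=\{n-1,n\}$: if the values $n-1,n$ occupy precisely positions $n-1,n$, then swapping those positions and swapping those values have identical effect, whereas if some value of $\{n-1,n\}$ sits at a position $\le n-2$, then $\widetilde{\cdot}$ changes that position but $\widehat{\cdot}$ leaves it fixed, so the two differ there. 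Applying $\widehat{\cdot}$ to both sides of $\widehat{\pi}=\widetilde{\pi}$ turns it into $\pi=\pi^*$. Finally, (b) makes $\pi,\widehat{\pi},\widetilde{\pi},\pi^*$ pairwise distinct apart from the single possible collision $\pi=\pi^*$ (equivalently $\widehat{\pi}=\widetilde{\pi}$), so $|\overline{\pi}|=2$ exactly when this collision occurs and $|\overline{\pi}|=4$ otherwise.

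For part~(e), the first equivalence is precisely the remark before Definition~\ref{convention} that a caterpillar tanglegram is determined by its distance-pair multiset, so I would focus on $\mathbb{D}(\mathcal{T}_\rho)=\mathbb{D}(\mathcal{T}_\pi)\iff\rho\in\overline{\pi}$. The implication $\Leftarrow$ is a direct check via~(a): the list $(a_1^*,\dots,a_n^*)$ is literally unchanged by $\widetilde{\cdot}$, because both values $n-1$ and $n$ produce the same starred value $n-1$; and $\widehat{\cdot}$ merely transposes the two entries $a_{n-1}^*,a_n^*$, which enter $\mathbb{D}$ only as the two pairs with first coordinate $n-1$. Thus every member of $\overline{\pi}$ has the same distance-pair multiset. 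For $\Rightarrow$ I would reconstruct $\pi$ from $\mathbb{D}$: the pairs $(i,a_i^*)$ with $i\le n-2$ determine $a_i$ exactly whenever $a_i^*<n-1$, and what remains undetermined is only (i) the order of the two values in positions $n-1,n$ and (ii) which of the two positions carrying starred value $n-1$ actually holds $n-1$ rather than $n$ --- and these are realized precisely by $\widehat{\cdot}$ and $\widetilde{\cdot}$.

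I expect the main obstacle to be this reconstruction step, where I must confirm that ambiguities (i) and (ii) are captured exactly by $G$, with no extra freedom and no missed coincidence. The clean way is a short case split on $|\{a_{n-1},a_n\}\cap\{n-1,n\}|\in\{0,1,2\}$: when this is $0$ or $1$ the two ambiguities are independent binary choices, so there are four permutations with the given $\mathbb{D}$; when it is $2$, i.e.\ $\{a_{n-1},a_n\}=\{n-1,n\}$, swapping positions and swapping values coincide, leaving only two. In every case the number of permutations sharing the multiset equals $|\overline{\pi}|$ as computed in~(d), so the inclusion $\overline{\pi}\subseteq\{\rho:\mathbb{D}(\mathcal{T}_\rho)=\mathbb{D}(\mathcal{T}_\pi)\}$ established in the $\Leftarrow$ direction must be an equality, completing~(e).
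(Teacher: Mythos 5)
Your proof is correct. The paper offers no argument for this proposition at all---it is stated as a list of ``obvious'' facts---so there is no authorial proof to compare against; your write-up simply supplies the omitted verification, and it holds up. Your organizing observation is the right one: $\widehat{\cdot}$ is composition with the transposition $(n-1\ n)$ on the position side and $\widetilde{\cdot}$ is composition with it on the value side, which makes (b) immediate and turns (c) into the standard fact that orbits of the resulting Klein four-group partition $S_n$. The only step that genuinely needs care is the reconstruction direction of (e), and your counting checks out: writing $k=|\{a_{n-1},a_n\}\cap\{n-1,n\}|$, the multiset $\mathbb{D}(\mathcal{T}_\pi)$ pins down $b_i=a_i$ at every position whose starred value is below $n-1$, and the remaining freedom is exactly two binary choices when $k\in\{0,1\}$ (in the case $k=1$ the value placed at the leftover position among $n-1,n$ is forced by the choice made at the earlier position carrying starred value $n-1$, but the two choices remain independent) and a single binary choice when $k=2$, giving $4,4,2$ permutations respectively---which matches $|\overline{\pi}|$ as computed in (d), so the inclusion from the easy direction is an equality. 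One cosmetic remark: in (d), your phrase ``pairwise distinct apart from the single possible collision'' tacitly uses that $\widehat{\pi}\neq\pi^*$ and $\widetilde{\pi}\neq\pi^*$, which follow by applying (b) to $\widehat{\pi}$ and $\widetilde{\pi}$ respectively; spelling that out would make the step airtight, but the claim is true as you state it.
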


\begin{dmydefs} We say that two sequences of $n$ numbers, $(a_1,\ldots,a_n),(b_1,\ldots,b_n)\in\mathbb{R}^n$, are {\textit order isomorphic,} if for all $i,j\in[n]$, we have  $a_i<a_j$ iff $b_i<b_j$. Given a $\pi\in S_n$ and a non-empty $A\subseteq [n]$, where $a_1,\ldots,a_k$ lists the elements of $A$ in increasing order, we denote by $\pi[A]$ the permutation in $S_{|A|}$ that is order isomorphic to $(\pi(a_1),\pi(a_2),\ldots,\pi(a_k))$. If $\rho\in S_m$ and $\pi\in S_n$, then we say that 
$\rho$ is a {\textit pattern in $\pi$} (in notation $\rho\leq \pi$), if  $\pi[A]=\rho$ for some $A\subseteq [n]$.
\end{dmydefs}

\begin{dmydefs}  Assume  $\pi\in S_n$ and $\emptyset\not= A\subseteq[n]$. Then (with a slight abuse of notation) we denote by $\mathcal{T}_{\pi}[A]$ the induced subtanglegram $\mathcal{T}_{\pi}[M^*]$, where
$M^{*}$ is the matching containing edges of $M$ incident upon leaves of the left tree that are labeled with elements of $A$.
\end{dmydefs}

\begin{proposition}\label{claim:main} The following statements are true:
\begin{enumerate}[label={\upshape (\alph*)}]
\item\label{part:cater} Let $v$ be a leaf of $C_n$ at distance $i$ from the root  $r$ of $C_n$, and $y\ne v$ be another leaf that is at distance $j$ from $r$. Let $T$ be the binary tree induced by all leaves except $v$ (so $T=C_{n-1}$) with root $r^*$. Then $y$ is a leaf in $T$, and the distance of $y$ from $r^*$ is $j$ if $j<i$, and $j-1$ otherwise.
\item\label{part:exchange} For any $\pi\in S_n$ and non-empty $A\subseteq[n]$, we have $\mathcal{T}_{\pi}[A]=\mathcal{T}_{\pi[A]}$. \emph {(This follows from~\ref{part:cater}).}
\item\label{part:goal} For $\rho\in S_m$ and $\pi\in S_n$, we have $\mathcal{T}_{\rho}\preceq\mathcal{T}_{\pi}$ iff $\mathcal{T}_{\rho}=\mathcal{T}_{\pi[A]}$ for some $A\subseteq[n]$  iff $\sigma\leq \pi$  for some $\sigma\in\overline{\rho}$. \emph{ (This follows from~\ref{part:exchange} and 
Proposition~\ref{claim:hattilde}~\ref{part:equal}).}
\end{enumerate}
\end{proposition}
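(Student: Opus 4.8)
The plan is to establish the three parts in order, since parts~\ref{part:exchange} and~\ref{part:goal} build on their predecessors exactly as the parenthetical remarks indicate.

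For part~\ref{part:cater} I would argue directly from the explicit structure of the caterpillar. Write the internal vertices of $C_n$ as a path $u_0=r,u_1,\ldots,u_{n-2}$, where $u_k$ sits at distance $k$ from the root; for $0\le k\le n-3$ the children of $u_k$ are $u_{k+1}$ and the (unique) leaf at distance $k+1$, while $u_{n-2}$ has the two leaves at distance $n-1$ as its children. Deleting the leaf $v$ at distance $i$ leaves its parent with a single child, so that parent is suppressed; the only exception is when the parent is the root, in which case the designated root $r^*$ moves down one step along the path. I would split into the cases $i=1$, $2\le i\le n-2$, and $i=n-1$, and in each case read off from the path structure that a surviving leaf at distance $j$ keeps distance $j$ when $j<i$ and drops to $j-1$ when $j\ge i$ (the only coincidence $j=i$ occurs for the two top leaves); counting the surviving leaves at each distance confirms $T=C_{n-1}$.

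For part~\ref{part:exchange} the clean route is to reduce to a single deletion and then appeal to part~\ref{part:cater} together with the distance-pair characterization. Because the induced-subtanglegram operation is transitive, i.e. $(\mathcal{T}[M'])[M'']=\mathcal{T}[M'']$ for $M''\subseteq M'$, the deletions of the left-leaves outside $A$ may be performed one at a time and in any order, so induction reduces the claim to the case $A=[n]\setminus\{m\}$. There I would apply part~\ref{part:cater} to the left caterpillar (deleting the leaf at distance $m$) and simultaneously to the right caterpillar (deleting its matched partner), and then check that the updated matching is the one prescribed by $\pi[[n]\setminus\{m\}]$. The main obstacle lives exactly here: the two leaves at maximal distance carry the interchangeable labels $n-1$ and $n$, so the distance update of part~\ref{part:cater} does not by itself pin down a single permutation. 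I would sidestep this by comparing \emph{multisets of distance pairs} rather than permutations. Using the fact noted earlier that a caterpillar tanglegram is determined by its multiset $\mathbb{D}$, together with Proposition~\ref{claim:hattilde}\ref{part:equal}, it suffices to verify $\mathbb{D}(\mathcal{T}_{\pi}[A])=\mathbb{D}(\mathcal{T}_{\pi[A]})$, and the distance bookkeeping from part~\ref{part:cater} on both trees makes this a direct computation, with the formula of Proposition~\ref{claim:hattilde}(a) absorbing the top-leaf ambiguity.

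Finally, part~\ref{part:goal} is a formal chaining of the previous facts. By definition $\mathcal{T}_{\rho}\preceq\mathcal{T}_{\pi}$ means $\mathcal{T}_{\rho}=\mathcal{T}_{\pi}[M^*]$ for some $\emptyset\neq M^*\subseteq M$; letting $A$ be the set of left-leaf labels incident to $M^*$ gives $\mathcal{T}_{\pi}[M^*]=\mathcal{T}_{\pi}[A]$, which yields the first ``iff'' (and forces $|A|=|M^*|=m$, since $\mathcal{T}_{\rho}$ has $m$ matching edges). Part~\ref{part:exchange} rewrites $\mathcal{T}_{\pi}[A]$ as $\mathcal{T}_{\pi[A]}$, and Proposition~\ref{claim:hattilde}\ref{part:equal} converts the tanglegram equality $\mathcal{T}_{\rho}=\mathcal{T}_{\pi[A]}$ into the membership $\pi[A]\in\overline{\rho}$, i.e. $\pi[A]=\sigma$ for some $\sigma\in\overline{\rho}$. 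Since the existence of an $A$ with $\pi[A]=\sigma$ is precisely the assertion $\sigma\le\pi$, the second ``iff'' follows; the converse directions simply reverse these same equivalences, giving an honest chain of biconditionals.
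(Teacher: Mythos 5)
Your proposal is correct and follows exactly the route the paper intends: the paper gives no written proof beyond the parenthetical hints, and your argument fleshes out precisely that chain --- a direct case analysis on the caterpillar for~(a), a reduction of~(b) to single deletions settled by comparing distance-pair multisets (which, as the paper notes, determine caterpillar tanglegrams, with Proposition~\ref{claim:hattilde} absorbing the ambiguity of the two deepest leaf labels), and a formal chain of biconditionals for~(c) via~(b) and Proposition~\ref{claim:hattilde}~\ref{part:equal}. Nothing essential is missing; your explicit handling of the root-suppression case $i=1$ and the $j=i$ coincidence for the two deepest leaves supplies exactly the details the paper treats as obvious.
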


\section{Constructing the antichain of tanglegrams}\label{sec:construct}

\begin{dmydefs}  For $i\in\mathbb{Z}^+$, we set $\rho_i\in S_{[12+2i]}$ as
$(\rho_i(1),\rho_i(2),\rho_i(3),\rho_i(4))=(2,3,5,1)$, $(\rho_i(9+2i),\rho_i(10+2i),\rho_i(11+2i),\rho_i(12+2i))=(10+2i,11+2i,12+2i,8+2i)$ and for
$j:5\le j\le 8+2i$
\begin{equation*}
\rho_i(j)=	\begin{cases} 
		j+2, & \text{ if $j$ is odd} \\
      		j-2, &\text{ if $j$ is even}.
   	\end{cases}	
\end{equation*}
\end{dmydefs}

So for example, the first two permutations  in our sequence will be
\begin{eqnarray*}
\rho_1&=&(2,3,5,1,7,4,9,6,11,8,12,13,14,10)\\
\rho_2&=&(2,3,5,1,7,4,9,6,11,8,13,10,14,15,16,12).
\end{eqnarray*}

Spielman and B\'ona \cite{bona} showed that  if $\pi_i$ is $\rho_i$ turned  ``upside down", then $\{\pi_i: i\in\mathbb{Z}^+\}$ is an antichain for the pattern partial order of permutations. 
We are now ready to show our result:
\begin{theorem} $\{\mathcal{T}_{\rho_i}:i\in\mathbb{Z}^+\}$ is an antichain with respect to the relation $\preceq$.
\end{theorem}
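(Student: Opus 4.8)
The plan is to translate the statement into the language of permutation patterns via Proposition~\ref{claim:main}\ref{part:goal} and then to upgrade B\'ona's antichain property from the single permutation $\rho_i$ to its whole class $\overline{\rho_i}$. Concretely, for $i\neq j$ Proposition~\ref{claim:main}\ref{part:goal} gives $\mathcal{T}_{\rho_i}\preceq\mathcal{T}_{\rho_j}$ if and only if some $\sigma\in\overline{\rho_i}=\{\rho_i,\widehat{\rho_i},\widetilde{\rho_i},\rho_i^*\}$ satisfies $\sigma\leq\rho_j$. Since $\{\mathcal{T}_{\rho_i}\}$ fails to be an antichain exactly when $\mathcal{T}_{\rho_i}\preceq\mathcal{T}_{\rho_j}$ for some $i\neq j$, it suffices to prove the single claim: for all $i\neq j$ and every $\sigma\in\overline{\rho_i}$ we have $\sigma\not\leq\rho_j$.

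First I would settle the case $\sigma=\rho_i$. Turning $\rho_i$ upside down is exactly forming its complement, and complementation is an involutive automorphism of the permutation-pattern order (that is, $\alpha\leq\beta$ iff the complement of $\alpha$ is a pattern in the complement of $\beta$). Hence it carries the Spielman--B\'ona antichain $\{\pi_i\}$ to the family $\{\rho_i\}$, which is therefore also an antichain; in particular $\rho_i\not\leq\rho_j$ whenever $i\neq j$.

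It remains to handle $\widehat{\rho_i},\widetilde{\rho_i},\rho_i^*$, and this is where the real work lies. These three permutations agree with $\rho_i$ on positions $1,\dots,9+2i$ (the head $(2,3,5,1)$ together with the ascending ``staircase'' body $(7,4,9,6,11,8,\dots)$) and differ from $\rho_i$ only in the last three positions, where the values $n-4,\,n-1,\,n$ (with $n=12+2i$) are rearranged so that the final four entries realize the order types $2341$ (for $\rho_i$), $2314$ (for $\widehat{\rho_i}$), $2431$ (for $\widetilde{\rho_i}$) and $2413$ (for $\rho_i^*$). I would first prove a rigidity lemma, adapting B\'ona's analysis: in any occurrence of one of these permutations inside $\rho_j$, the large values and the isolated small values (the entry $1$ and the descent-bottoms of the body) are forced onto the corresponding extreme entries of $\rho_j$, and the staircase then propagates these constraints, pinning down the image of the entire common prefix and forcing the lengths of the two staircases to be comparable.

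The main obstacle is the final step: showing that, once the head and body have been anchored, none of the three altered tails can be accommodated in $\rho_j$ when $i\neq j$. Because patterns may be embedded non-contiguously, one cannot merely compare the last four entries of the two permutations; instead I would analyze the few positions of $\rho_j$ that remain available for the images of $n-1,\,n$ and $n-4$ after the staircase is fixed, and check in each of the three twisted cases that any legal placement either forces $\rho_i\leq\rho_j$ (contradicting the complement argument above) or violates the staircase ordering already imposed. Carrying out this short but delicate case analysis uniformly for $\widehat{\rho_i},\widetilde{\rho_i}$ and $\rho_i^*$ is the crux of the proof, and it is precisely the ``twist'' that rearranging the two largest values adds to the original Spielman--B\'ona construction.
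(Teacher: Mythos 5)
Your reduction via Proposition~\ref{claim:main}~\ref{part:goal} matches the paper's, and your complementation argument for the case $\sigma=\rho_i$ is sound: complementation (turning ``upside down'') is an involutive automorphism of the pattern order, so the Spielman--B\'ona antichain property of $\{\pi_i\}$ transfers verbatim to $\{\rho_i\}$. (The paper instead reproves this case self-containedly via the staircase/parity analysis, ``for completeness,'' but citing \cite{bona} is legitimate.) The genuine gap is exactly where you yourself locate ``the crux'': for $\widehat{\rho_i}$, $\widetilde{\rho_i}$ and $\rho_i^*$ you only announce a rigidity lemma and a ``short but delicate case analysis'' without carrying either out, so the three twisted cases are never actually excluded. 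Moreover, the lemma you promise is stronger than what can be true: since the staircase of $\rho_j$ is strictly longer than that of $\rho_i$, an occurrence cannot pin ``the image of the entire common prefix'' to corresponding entries---any embedding must omit $2(j-i)$ staircase elements, so there is slack in the middle---and your proposed dichotomy (every legal placement of $n-4$, $n-1$, $n$ ``either forces $\rho_i\leq\rho_j$ or violates the staircase ordering'') is asserted, not argued.

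What is missing is a cheap invariant that pins the embedding at the two critical entries. The paper uses: in every $\gamma\in\overline{\rho_k}$, exactly two entries are preceded by at least $3$ larger entries, namely the entry $1$ and the entry $8+2k$. Hence any occurrence of $\sigma\in\overline{\rho_i}$ in $\rho_j$ must send $1\mapsto 1$ and $8+2i\mapsto 8+2j$, with the larger entries preceding them mapping to larger preceding entries. This disposes of your three hard cases in one line each: $8+2j$ is the \emph{last} entry of $\rho_j$, while $8+2i$ is not last in $\widehat{\rho_i}$ or $\rho_i^*$, so neither can embed; and the four larger entries preceding $8+2i$ in $\widetilde{\rho_i}$ form the pattern $1243$, whereas the four (and only four) larger entries preceding $8+2j$ in $\rho_j$ form $1234$, so $\widetilde{\rho_i}\not\leq\rho_j$. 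The difficulty is thus distributed in the reverse of what you expected: the twisted tails are the easy part once this counting invariant is in hand, and the staircase analysis---which you outsource to \cite{bona} via complementation, legitimately---is where the real work sits. As written, your proposal establishes only the $\sigma=\rho_i$ case.
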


\begin{proof}
In the proof we will use the fact that for any $k$ and 
any $\gamma\in\overline{\rho_k}$, the permutation $\gamma$ has exactly two entries that are preceded by at least $3$ larger elements:
the entry $1$ and the entry $8+2i$; moreover, if $\gamma\in\{\rho_k,\widehat{\rho_k}\}$ then $8+2k$ is preceded by exactly $4$ larger elements, but these $4$ elements are not order isomorphic in $\rho_k$ and $\widetilde{\rho_k}$.

By Proposition~\ref{claim:main}~\ref{part:goal}, it is sufficient to show that for any $i<j$ and for any $\sigma\in\overline{\rho_i}$, $\sigma\not\leq\rho_j$.
By our starting remark, if $\sigma<\rho_j$, then  the entries $1$ and $8+2i$ in $\sigma$   should map to the entries $1$ and $8+2j$ in $\rho_j$, and the preceding larger elements must map to preceding larger entries; consequently $\widetilde{\rho_i}\not\leq\rho_j$.
 As $8+2j$ is the last entry of $\rho_j$, but not of $\widehat{\rho_i}$ or $\rho_i^*$ (unless $\rho_i^*=\rho_i$), we get that $\widehat{\rho_i}\not\leq\rho_j$ and
$\rho_i^*\not\leq\rho_j$. So what remains to be shown is $\rho_i\not\leq \rho_j$, which was essentially stated and proved in \cite{bona}, but for completeness, we include a (somewhat different) proof here.

Suppose  for contrary  that $\rho_i < \rho_j$, i.e. entries of $\rho_i$ map  to  entries of $\rho_j$ in an order preserving fashion. 
By our earlier remarks, the first $4$ elements of $\rho_i$ must map to the first $4$ elements of $\rho_j$ and the last $6$ elements of
$\rho_i$ must map to the last $6$ elements of $\rho_j$, so me must map the sequence
$(7,4,9,6,\ldots,7+2i,4+2i$) to $(7,4,9,6,\ldots,7+2j,4+2j)$ by leaving out $2(j-i)\ge 2$ elements.

Let $x$ be an entry of the contiguous subsequence  $(7,4,9,6,\ldots,7+2k,4+2k$) of $\rho_k$.
If $x$ is even, then there are no entries that appear after $x$ in $\rho_k$ that are smaller than $x$, and $x$ is preceeded by the entry $x+1$.
If $x$ is odd, then there are exactly two entries in $\rho_k$ that follow $x$ and are smaller than $x$, and they are both even.

Let $x$ now be the first entry that is erased from $\rho_j$. The entries before $x$ in $\rho_i$ are mapped to the same entries, respectively, in $\rho_j$, and the entry
$x$ in $\rho_i$ is mapped to a different entry that appears after $x$ in $\rho_j$.

If $x$ is even, then, as the entry $x+1$ is before $x$ in $\rho_i$, $x$ must map to an entry smaller than $x+1$ but is after $x$ in $\rho_j$. As such an entry does not exist, $x$ must be odd.

As $x$ is odd, it is immediately followed by the even entry $x-3$ in both $\rho_i$ and $\rho_j$, and preceeded by the entry $x-2$, which was not erased from $\rho_j$. As  entry $x-2$ in $\rho_i$ maps to  entry $x-2$ in $\rho_j$, and entry $x$ in $\rho_i$ maps to an entry after $x$ in $\rho_j$, it follows that entry $x-3$ in $\rho_i$ must map to an entry that is after $x-3$ in $\rho_j$ and is smaller than $x-3$. Since
such an entry does not exist, $\rho_i\not\leq\rho_j$. \qed
\end{proof}

We remark here that in the  infinite antichain of permutations   $\{\pi_i:i\in\mathbb{Z}^+\}$ of \cite{bona}, $\pi_i$ is  our $\rho_i$ is  {\textit turned ``upside down"}. For example,
\begin{eqnarray*}
\pi_1&=&(13,12,10,14,8,11,6,9,4,7,3,2,1,5)\\
\pi_2&=&(15,14,12,16,10,13,8,11,6,9,4,7,3,2,1,5).
\end{eqnarray*}
One can easily check that for $A=[16]\setminus\{2,4\}$ we get $\widetilde{\pi_1}=\pi_2[A]$, showing that $\mathcal{T}_{\pi_1}\preceq\mathcal{T}_{\pi_2}$.
Moreover, for every $i\in\mathbb{Z}^+$, setting $A_i=[14+2i]\setminus\{2,4\}$,  we observe that $\widetilde{\pi_i}=\pi_{i+1}[A_i]$, showing that 
\begin{proposition}
  $\{\mathcal{T}_{\pi_i}:i\in\mathbb{Z}^+\}$ is  an infinite chain in the induced subtanglegram partial order.
\end{proposition}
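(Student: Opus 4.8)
The plan is to promote the single-step relations $\mathcal{T}_{\pi_i}\preceq\mathcal{T}_{\pi_{i+1}}$ to the assertion that $\{\mathcal{T}_{\pi_i}:i\in\mathbb{Z}^+\}$ is an infinite chain, using transitivity of $\preceq$ together with the fact that the $\mathcal{T}_{\pi_i}$ have distinct sizes. The workhorse is the pattern identity $\widetilde{\pi_i}=\pi_{i+1}[A_i]$ with $A_i=[14+2i]\setminus\{2,4\}$, already recorded above.

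First I would deduce $\mathcal{T}_{\pi_i}\preceq\mathcal{T}_{\pi_{i+1}}$ for every $i\in\mathbb{Z}^+$. Applying Proposition~\ref{claim:main}~\ref{part:exchange} with $\pi=\pi_{i+1}$ and $A=A_i$ gives $\mathcal{T}_{\pi_{i+1}}[A_i]=\mathcal{T}_{\pi_{i+1}[A_i]}=\mathcal{T}_{\widetilde{\pi_i}}$. Since $\widetilde{\pi_i}\in\overline{\pi_i}$, Proposition~\ref{claim:hattilde}~\ref{part:equal} gives $\mathcal{T}_{\widetilde{\pi_i}}=\mathcal{T}_{\pi_i}$. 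Combining the two, $\mathcal{T}_{\pi_i}=\mathcal{T}_{\pi_{i+1}}[A_i]$, and the right-hand side is by definition an induced subtanglegram of $\mathcal{T}_{\pi_{i+1}}$ (namely $\mathcal{T}_{\pi_{i+1}}[M^*]$ for $M^*$ the edges incident to the left leaves labeled by $A_i$); hence $\mathcal{T}_{\pi_i}\preceq\mathcal{T}_{\pi_{i+1}}$.

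Next I would assemble the chain. As noted after the definition of the induced subtanglegram order, $\preceq$ is a partial order and hence transitive, so the consecutive relations chain together to yield $\mathcal{T}_{\pi_i}\preceq\mathcal{T}_{\pi_j}$ whenever $i\le j$; thus the family is totally ordered by $\preceq$. It is genuinely infinite because $\mathcal{T}_{\pi_i}$ has size $12+2i$, and size (the number of matching edges) is preserved by tanglegram isomorphism, so the $\mathcal{T}_{\pi_i}$ are pairwise distinct; the chain is in fact strictly increasing.

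The only computational point is the identity $\widetilde{\pi_i}=\pi_{i+1}[A_i]$ itself, which I would verify from the explicit ``upside-down'' description of $\pi_i$. The decisive bookkeeping is confined to the first four positions: positions $1$--$4$ of $\pi_{i+1}$ carry the values $13+2i,\,12+2i,\,10+2i,\,14+2i$, so deleting positions $2$ and $4$ removes the value $12+2i$ together with the maximum $14+2i$, after which order-isomorphic renormalization sends $13+2i\mapsto 12+2i$. Meanwhile the surviving positions $3,5,6,\ldots,14+2i$ are reindexed to $2,3,4,\ldots,12+2i$ and keep their values, since those values are all at most $11+2i$ and are therefore fixed by the renormalization. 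A short check then confirms that the net effect on the value set is exactly the transposition of $n-1=11+2i$ and $n=12+2i$ that defines $\widetilde{\pi_i}$. This verification is the main, though entirely routine, obstacle; everything else is a direct application of the two propositions together with transitivity and the size invariant.
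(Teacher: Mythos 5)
Your proof is correct and takes essentially the same route as the paper, whose entire argument is the preceding remark that $\widetilde{\pi_i}=\pi_{i+1}[A_i]$ for $A_i=[14+2i]\setminus\{2,4\}$, combined implicitly with Proposition~\ref{claim:main}~\ref{part:exchange} and Proposition~\ref{claim:hattilde}~\ref{part:equal} to get $\mathcal{T}_{\pi_i}\preceq\mathcal{T}_{\pi_{i+1}}$. Your explicit verification of the identity (correct: deleting positions $2$ and $4$ removes the values $12+2i$ and $14+2i$, the renormalization sends $13+2i\mapsto 12+2i$ and fixes all other surviving values, and the net effect is the transposition of $11+2i$ and $12+2i$) together with the transitivity and size observations merely spells out what the paper leaves as ``one can easily check.''
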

This is why we had to put a twist on the construction of \cite{bona}.

\section{Planarity of the tanglegrams in the antichain}\label{sec:planar}

Lastly, we   show that the tanglegrams $\mathcal{T}_{\rho_i}$ are planar. For this we need to define layouts first.

\begin{dmydefs}
A {\em plane binary tree} is a rooted binary tree, in which the  children of  internal vertices are specified as left and right children. A plane binary tree is easy to draw  on one side 
of a line, without edge crossings, such that only the leaves of the tree are on the line. We will say that the plane binary tree $P$ is a {\textit plane tree of the rooted
binary tree $T$}, if $P$ is isomorphic to $T$ as a graph.
\end{dmydefs}

Note that if we label all vertices of a rooted binary tree with $n$ leaves, 
 then there are $2^{n-1}$ labeled plane trees whose underlying labeled graph is this labeled rooted binary tree.

\begin{dmydefs}
A  {\em  layout} $(L,R,M)$ of the tanglegram $\mathcal{T}=(T_1,T_2,M)$ is given by  a left plane binary tree $L$ isomorphic to $T_1$, drawn in the halfplane $x\leq 0$, having
its leaves on the  line $x=0$,
a right plane binary tree $R$ isomorphic to $T_2$  drawn in the halfplane $x\geq 1$, having
its leaves on the  line $x=1$,    
and the perfect matching  $M$ between their leaves drawn in straight line segments. \emph{(See Figure~\ref{fig:layout}.)}
\end{dmydefs}

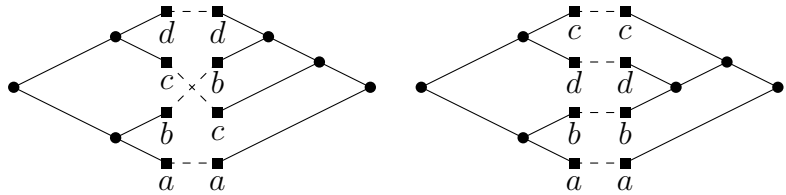
\begin{figure}[htbp]
\begin{center}
\begin{tikzpicture}[scale=.67]
        \node[fill=black,circle,inner sep=1.5pt]  at (-1,0) {}; 
        \node[fill=black,circle,inner sep=1.5pt]  at (1,-1) {};
        \node[fill=black,circle,inner sep=1.5pt]  at (1,1) {};
        \node[fill=black,rectangle,inner sep=2pt]  at (2,-1.5) {};
        \node[fill=black,rectangle,inner sep=2pt]  at (2,-.5) {};
        \node[fill=black,rectangle,inner sep=2pt]  at (2,.5) {};
        \node[fill=black,rectangle,inner sep=2pt]  at (2,1.5) {};

	\draw (-1,0)--(2,1.5);
	\draw (-1,0)--(2,-1.5);
	\draw (1,-1)--(2,-.5);
	\draw (1,1)--(2,.5);

        \node[fill=black,rectangle,inner sep=2pt]  at (3,-1.5) {};
        \node[fill=black,rectangle,inner sep=2pt]  at (3,-.5) {};
        \node[fill=black,rectangle,inner sep=2pt]  at (3,.5) {};
        \node[fill=black,rectangle,inner sep=2pt]  at (3,1.5) {};
        \node[fill=black,circle,inner sep=1.5pt]  at (4,1) {};
        \node[fill=black,circle,inner sep=1.5pt]  at (5,.5) {};
        \node[fill=black,circle,inner sep=1.5pt]  at (6,0) {}; 

	\draw (6,0)--(3,1.5);
	\draw (6,0)--(3,-1.5);
	\draw (5,.5)--(3,-.5);
	\draw (4,1)--(3,.5);

	\draw [dashed] (2,-1.5)--(3,-1.5);
	\draw [dashed] (2,-.5)--(3,.5);
	\draw [dashed] (3,-.5)--(2,.5);
	\draw [dashed] (2,1.5)--(3,1.5);
	
	\node at (2,-1.9) {$a$};
	\node at (3,-1.9) {$a$};
         \node at (2,-.9)  {$b$};
         \node at (3,.1)  {$b$};
         \node at (3,-.9)  {$c$};
         \node at (2,.1)  {$c$};
         \node at  (2,1.1) {$d$};
         \node at (3,1.1)    {$d$};
         
        \node[fill=black,circle,inner sep=1.5pt]  at (7,0) {}; 
        \node[fill=black,circle,inner sep=1.5pt]  at (9,-1) {};
        \node[fill=black,circle,inner sep=1.5pt]  at (9,1) {};
        \node[fill=black,rectangle,inner sep=2pt]  at (10,-1.5) {};
        \node[fill=black,rectangle,inner sep=2pt]  at (10,-.5) {};
        \node[fill=black,rectangle,inner sep=2pt]  at (10,.5) {};
        \node[fill=black,rectangle,inner sep=2pt]  at (10,1.5) {};

	\draw (7,0)--(10,1.5);
	\draw (7,0)--(10,-1.5);
	\draw (9,-1)--(10,-.5);
	\draw (9,1)--(10,.5);

        \node[fill=black,rectangle,inner sep=2pt]  at (11,-1.5) {};
        \node[fill=black,rectangle,inner sep=2pt]  at (11,-.5) {};
        \node[fill=black,rectangle,inner sep=2pt]  at (11,.5) {};
        \node[fill=black,rectangle,inner sep=2pt]  at (11,1.5) {};
        \node[fill=black,circle,inner sep=1.5pt]  at (12,0) {}; 
        \node[fill=black,circle,inner sep=1.5pt]  at (13,.5) {}; 
        \node[fill=black,circle,inner sep=1.5pt]  at (14,0) {}; 

	\draw (14,0)--(11,1.5);
	\draw (14,0)--(11,-1.5);
	\draw (13,.5)--(11,-.5);
	\draw (11,.5)--(12,0);

	\draw [dashed] (10,-1.5)--(11,-1.5);
	\draw [dashed] (10,-.5)--(11,-.5);
	\draw [dashed] (10,.5)--(11,.5);
	\draw [dashed] (10,1.5)--(11,1.5);
	
	\node at (10,-1.9) {$a$};
	\node at (10,-.9) {$b$};
	\node at (10,.1) {$d$};
	\node at (10,1.1) {$c$};
	\node at (11,1.1) {$c$};
	\node at (11,0.1) {$d$};
	\node at (11,-.9) {$b$};
	\node at (11,-1.9) {$a$};
\end{tikzpicture}
\end{center}
\caption{Two layouts of the same tanglegram. The leaf labels help showing that the two tanglegrams are identical.
}  \label{fig:layout}
\end{figure}

\begin{dmydefs}
A tanglegram  is {\textit planar} if it has a layout without crossing edges.
\end{dmydefs}

\begin{theorem}[Czabarka, Sz\'ekely, Wagner \cite{tanglekurat}] \label{thm:kurat}
Every non-planar tanglegram contains one of the  two tanglegrams in \emph{Figure~\ref{fig:excluded}} as
an induced subtanglegram. 
\end{theorem}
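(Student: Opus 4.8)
The plan is to reduce planarity to a purely combinatorial condition on the matching and then to classify the minimal obstructions. First I would observe that a crossing-free layout $(L,R,M)$ is exactly one in which the top-to-bottom order that the plane tree $L$ induces on the left leaves agrees, through $M$, with the order that $R$ induces on the right leaves: two straight matching segments cross precisely when one pair of partners appears in the reverse vertical order of another pair. Choosing a plane embedding of a binary tree amounts to choosing, at each internal vertex, which child subtree is drawn above the other, and the admissible leaf orders are exactly those in which the leaf set of every subtree forms a contiguous block. Hence $\mathcal{T}=(T_1,T_2,M)$ is planar if and only if there is a single linear order of the $n$ matching edges under which the leaf set of every subtree of $T_1$, and (transported by $M$) the leaf set of every subtree of $T_2$, are \emph{simultaneously} intervals.

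Next I would turn this into a recursion at the roots. Let the root of $T_1$ split its leaves into the two principal subtrees $A_1,B_1$, and the root of $T_2$ split its leaves into $A_2,B_2$; pull the latter partition back to the left leaves via $M$. In any crossing-free layout the topmost block on the left is one principal subtree and must coincide with the topmost block on the right, so the partitions $\{A_1,B_1\}$ and $\{A_2,B_2\}$ of the common leaf set must be equal (up to swapping the two children at a root, which is free). I would then prove the recursive characterization: $\mathcal{T}$ is planar if and only if these root partitions coincide and the two induced subtanglegrams on the matched halves are themselves planar. The two excluded size-four tanglegrams in the statement are checked to be non-planar directly from this criterion, which is a finite verification given their size.

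For the main implication I would take a non-planar tanglegram $\mathcal{T}$ with the fewest leaves that contains \emph{no} excluded induced subtanglegram and derive a contradiction using the recursion. If the two root partitions coincide, then one matched half is non-planar; being an induced subtanglegram of $\mathcal{T}$ with fewer leaves, by minimality it would contain an excluded pattern, which is then an induced subtanglegram of $\mathcal{T}$ as well --- a contradiction. Hence the root partitions differ, so at least one principal subtree on one side is properly split by the principal bipartition on the other side, i.e.\ the matching genuinely mixes the two sides at a root. The core of the argument, and the step I expect to be hardest, is to extract from this mixing a small induced subtanglegram equal to one of the two excluded ones: I would select leaves of a single left subtree whose partners straddle the right root, together with a few further leaves to close up a crossing, and then descend into the internal structure of both trees to pin down the relative positions of the chosen leaves. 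The obstacle is bookkeeping --- one must verify that, whatever the configuration of the mixing, the selected leaves can always be completed to exactly one of the two excluded size-four patterns and never to a planar size-four tanglegram, which amounts to a careful finite case analysis of how the subtrees hanging off the two roots interleave.
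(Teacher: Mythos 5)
First, note that this statement is not proved in the paper at all: it is quoted as Theorem~\ref{thm:kurat} from the earlier paper \cite{tanglekurat}, so there is no internal proof to compare against; your attempt has to be judged on its own merits, against the cited source.

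On its own merits, your proposal has a genuine gap, and it is located exactly at the recursive characterization on which everything else rests. Your first reduction is fine: a tanglegram is planar iff there is one linear order of the matching edges in which the leaf set of every subtree of $T_1$ and of $T_2$ is an interval, and a non-crossing straight-line matching between the two vertical lines must match the $k$-th leaf from the top on the left with the $k$-th from the top on the right. But precisely because the matching is forced to be order-preserving, the top principal block of $T_1$ maps onto an \emph{initial segment} of the right order, which may be properly nested inside the top principal block of $T_2$ --- the two root partitions need not coincide. Concretely, take $n=3$: let the left root have children a leaf $x$ and a cherry $\{y,z\}$, the right root a cherry $\{x',y'\}$ and a leaf $z'$, with matching $xx',yy',zz'$. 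The orders $(x,y,z)$ and $(x',y',z')$ give a crossing-free layout, so this tanglegram is planar, yet the root partitions $\{\{x\},\{y,z\}\}$ and (pulled back via $M$) $\{\{x,y\},\{z\}\}$ differ. So the ``only if'' direction of your claimed recursion (``planar iff the root partitions coincide and the two halves are planar'') is false, and with it the minimal-counterexample argument collapses: in your final case, ``root partitions differ'' does not certify non-planarity, so there is no reason the mixing at the roots can be completed to one of the two excluded size-four patterns --- many planar tanglegrams mix the two sides at the root. The step you flagged as ``bookkeeping'' is in fact the entire content of the theorem, and as set up it cannot go through, because the hypothesis you would be working from is compatible with planarity. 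A correct route (the one taken in \cite{tanglekurat}) instead analyzes a non-planar tanglegram that is minimal under the induced-subtanglegram order --- every proper induced subtanglegram is planar, so deleting one matching edge yields a crossing-free layout into which the deleted pair must fail to be re-insertable --- and this forces the minimal obstruction down to size four, where the non-planar tanglegrams are classified and turn out to be exactly the two in Figure~\ref{fig:excluded}. Any repair of your approach would need to replace the false root recursion with a correct planarity criterion that handles nested (not merely equal) root splits.
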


\begin{figure}[htbp]
\begin{center}
\begin{tikzpicture}[line/.style={-},scale=.7]
  \node(A)[fill=black,circle,inner sep=2pt] at (8,0) {};
        \node[fill=black,circle,inner sep=1.5pt]  at (10,-1) {};
        \node[fill=black,circle,inner sep=1.5pt]  at (10,1) {};
        \node[fill=black,rectangle,inner sep=2pt]  at (11,-1.5) {};
        \node[fill=black,rectangle,inner sep=2pt]  at (11,-.5) {};
        \node[fill=black,rectangle,inner sep=2pt]  at (11,.5) {};
        \node[fill=black,rectangle,inner sep=2pt]  at (11,1.5) {};
	\node at (0,1.3) {$\mathcal{T}_{(3,2,1,4)}$};
		
	\draw (8,0)--(11,1.5);
	\draw (8,0)--(11,-1.5);
	\draw (10,-1)--(11,-.5);
	\draw (10,1)--(11,.5);

        \node[fill=black,rectangle,inner sep=2pt]  at (12,-1.5) {};
        \node[fill=black,rectangle,inner sep=2pt]  at (12,-.5) {};
        \node[fill=black,rectangle,inner sep=2pt]  at (12,.5) {};
        \node[fill=black,rectangle,inner sep=2pt]  at (12,1.5) {};
        \node[fill=black,circle,inner sep=1.5pt]  at (13,1) {};
        \node[fill=black,circle,inner sep=1.5pt]  at (13,-1) {};
        \node(B)[fill=black,circle,inner sep=2pt]  at (15,0) {};

	\draw (15,0)--(12,1.5);
	\draw (15,0)--(12,-1.5);
	\draw (13,-1)--(12,-.5);
	\draw (13,1)--(12,.5);
        
        	\draw [dashed] (11,-1.5)--(12,-1.5);
	\draw [dashed] (11,-.5)--(12,.5);
	\draw [dashed] (12,-.5)--(11,.5);
	\draw [dashed] (11,1.5)--(12,1.5);

        \node(C)[fill=black,circle,inner sep=2pt]  at (-1,0) {};
       \node[fill=black,circle,inner sep=1.5pt]  at (0,.5) {};
        \node[fill=black,circle,inner sep=1.5pt]  at (1,0) {};
        \node[fill=black,rectangle,inner sep=2pt]  at (2,1.5) {};
        \node[fill=black,rectangle,inner sep=2pt]  at (2,.5) {};
        \node[fill=black,rectangle,inner sep=2pt]  at (2,-.5) {};
        \node[fill=black,rectangle,inner sep=2pt]  at (2,-1.5) {};

	\draw (-1,0)--(2,-1.5);
	\draw (-1,0)--(2,1.5);
	\draw (0,.5)--(1,0); %
	\draw (1,0)--(2,-.5);
	\draw (1,0)--(2,.5);

        \node[fill=black,rectangle,inner sep=2pt]  at (3,1.5) {};
        \node[fill=black,rectangle,inner sep=2pt]  at (3,.5) {};
        \node[fill=black,rectangle,inner sep=2pt]  at (3,-.5) {};
        \node[fill=black,rectangle,inner sep=2pt]  at (3,-1.5) {};
        \node[fill=black,circle,inner sep=1.5pt]  at (4,0) {};
        \node[fill=black,circle,inner sep=1.5pt]  at (3,.5) {}; %
        \node[fill=black,circle,inner sep=1.5pt]  at (5,.5) {};  
        \node(D)[fill=black,circle,inner sep=2pt]  at (6,0) {};

	\draw (6,0)--(3,-1.5);
	\draw (6,0)--(3,1.5);
	\draw (5,.5)--(3,-.5);
	\draw (4,0)--(3,.5);

	\draw [dashed] (2,1.5)--(3,1.5);
	\draw [dashed] (2,.5)--(3,.5);
	\draw [dashed] (2,-.5)--(3,-1.5);
	\draw [dashed] (2,-1.5)--(3,-.5);
        
\end{tikzpicture}
\end{center}
\caption{The two tanglegrams excluded from planar tanglegrams. The tanglegram on the left
is the catergram $\mathcal{T}_{(3,2,1,4)}$, but the tanglegram on the right is not a catergram, as the trees are not caterpillars.}\label{fig:excluded}
\end{figure}
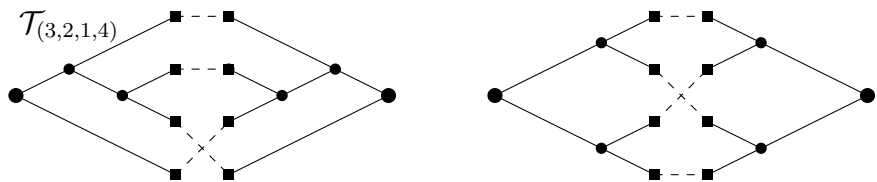

Now we are ready to show:
\begin{proposition} For every $i\in\mathbb{Z}^+$ the catergram $\mathcal{T}_{\rho_i}$ is planar.
\end{proposition}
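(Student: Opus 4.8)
The plan is to invoke the tanglegram Kuratowski theorem (Theorem~\ref{thm:kurat}): it suffices to show that $\mathcal{T}_{\rho_i}$ contains neither of the two excluded tanglegrams of Figure~\ref{fig:excluded} as an induced subtanglegram. The right-hand excluded tanglegram is \emph{not} a catergram, since its trees are not caterpillars. On the other hand, by Proposition~\ref{claim:main}~\ref{part:exchange} every induced subtanglegram of $\mathcal{T}_{\rho_i}$ equals $\mathcal{T}_{\rho_i[A]}$ for some $A$, hence is itself a catergram; so the right-hand tanglegram can never occur as an induced subtanglegram. This reduces the problem to showing that the left-hand excluded tanglegram, the catergram $\mathcal{T}_{(3,2,1,4)}$, does not satisfy $\mathcal{T}_{(3,2,1,4)}\preceq\mathcal{T}_{\rho_i}$.

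For this reduction I would translate to permutation patterns via Proposition~\ref{claim:main}~\ref{part:goal}: the relation $\mathcal{T}_{(3,2,1,4)}\preceq\mathcal{T}_{\rho_i}$ holds iff $\sigma\leq\rho_i$ for some $\sigma\in\overline{(3,2,1,4)}$. First I would compute the orbit $\overline{(3,2,1,4)}=\{(3,2,1,4),(3,2,4,1),(4,2,1,3),(4,2,3,1)\}$ directly from the definitions of $\widehat{\cdot}$ and $\widetilde{\cdot}$. The key observation is then that each of these four permutations contains the decreasing pattern $(3,2,1)$ (for instance $3,2,1$ in the first, $3,2,1$ on positions $1,2,4$ in the second, and $4,2,1$ in the last two). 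Consequently it is enough to prove that $\rho_i$ avoids $(3,2,1)$, i.e.\ has no decreasing subsequence of length three.

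To establish that $\rho_i$ avoids $(3,2,1)$, I would exhibit a partition of the entries of $\rho_i$ into two increasing subsequences; by the pigeonhole principle any three-term subsequence then contains two entries lying in a common increasing chain, so no decreasing subsequence of length three can exist. Concretely, one chain collects $2,3,5$, the ``odd-index'' middle values $7,9,\dots,9+2i$, and the terminal block $10+2i,11+2i,12+2i$, while the other collects $1$, the ``even-index'' middle values $4,6,\dots,6+2i$, and the last entry $8+2i$. I would check that each chain is strictly increasing and that together they use every value in $[12+2i]$ exactly once, uniformly in $i$. Combining the three steps gives $\sigma\not\leq\rho_i$ for all $\sigma\in\overline{(3,2,1,4)}$, hence $\mathcal{T}_{(3,2,1,4)}\not\preceq\mathcal{T}_{\rho_i}$, and the planarity of $\mathcal{T}_{\rho_i}$ follows.

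The only delicate point, and the step most prone to error, is the bookkeeping of the two-chain decomposition across the three structurally different blocks of $\rho_i$ (the initial $(2,3,5,1)$, the alternating middle governed by the parity of the index, and the terminal four entries $(10+2i,11+2i,12+2i,8+2i)$): one must verify that the chains remain increasing at the seams between blocks and that no value is missed or repeated. Everything else is a short verification, and I would double-check the computation of $\overline{(3,2,1,4)}$ and that all four of its members genuinely contain $(3,2,1)$.
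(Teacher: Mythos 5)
Your proof is correct, and its skeleton coincides with the paper's: both invoke Theorem~\ref{thm:kurat}, discard the non-caterpillar excluded tanglegram because induced subtanglegrams of catergrams are again catergrams, and reduce via Proposition~\ref{claim:main}~\ref{part:goal} to showing that none of $(3,2,1,4)$, $(4,2,1,3)$, $(3,2,4,1)$, $(4,2,3,1)$ (your orbit computation matches the paper's list) is a pattern of $\rho_i$. Where you genuinely diverge is the pattern-avoidance step. The paper disposes of $(3,2,1,4)$ and $(4,2,1,3)$ by asserting, without proof, that $\rho_i$ has no decreasing subsequence of length $3$, and then handles $(3,2,4,1)$ and $(4,2,3,1)$ by a separate structural argument: their last entry is preceded by three larger entries, so it must map to the entry $1$ or $8+2i$ of $\rho_i$, forcing the first three entries into one of the increasing blocks $(2,3,5)$ or $(9+2i,10+2i,11+2i,12+2i)$, which contradicts the decreasing first pair. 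You instead observe that all four members of $\overline{(3,2,1,4)}$ contain the pattern $(3,2,1)$ --- which is true, e.g.\ via positions $1,2,4$ in $(3,2,4,1)$ and $(4,2,3,1)$ --- so by transitivity of pattern containment the entire case analysis collapses to the single claim that $\rho_i$ is $321$-avoiding, and you prove that claim explicitly by partitioning $\rho_i$ into two increasing subsequences: the values $2,3,5,7,9,\ldots,9+2i,10+2i,11+2i,12+2i$ occupy positions $1,2,3,5,7,\ldots,7+2i,9+2i,10+2i,11+2i$, and the values $1,4,6,\ldots,6+2i,8+2i$ occupy positions $4,6,8,\ldots,8+2i,12+2i$; both chains are strictly increasing in position order, they are disjoint, and they exhaust $[12+2i]$, so the pigeonhole argument goes through. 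The bookkeeping you flagged as delicate does check out at all the seams. Your route is thus a mild unification and strengthening of the paper's: it replaces the paper's second, ad hoc argument by the single $321$-avoidance fact and supplies the verification of that fact which the paper leaves implicit, at the modest cost of the explicit chain decomposition; the paper's version, in exchange, avoids any global structural claim about $\rho_i$ beyond what it already uses in the antichain theorem.
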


\begin{proof} Since any leaf-induced subtree of a rooted caterpillar is another caterpillar,
 Theorem~\ref{thm:kurat} yields that $\mathcal{T}_{\rho_i}$ is not planar iff it contains an induced$\mathcal{T}_{(3,2,1,4)}$.
By Proposition~\ref{claim:main}~\ref{part:goal} this happens precisely when one of $(3,2,1,4)$, $(4,2,1,3)$, $(3,2,4,1)$, $(4,2,3,1)$ is a pattern of $\rho_i$.
As $\rho_i$ does not contain a decreasing subsequence of length $3$, $(3,2,1,4)$ and $(4,2,1,3)$ are not among its patterns.
The last entry of the remaining $(3,2,4,1)$ and $(4,2,3,1)$ has three larger elements preceding it, and the first two elements are in decreasing order.  
If  they are patterns of $\rho_i$,
then $1$ must map to either $1$ or $8+2i$. If $1$ maps to $1$, then the other three elements must map to the sequence $(2,3,5)$, and if $1$ maps to $8+2i$, then the remaining three elements must map to a subsequence of
$(9+2i,10+2i,11+2i,12+2i)$. As both of these are increasing,
$(3,2,4,1)$ and $(4,2,3,1)$ are not patterns of $\rho_i$. \qed
\end{proof}

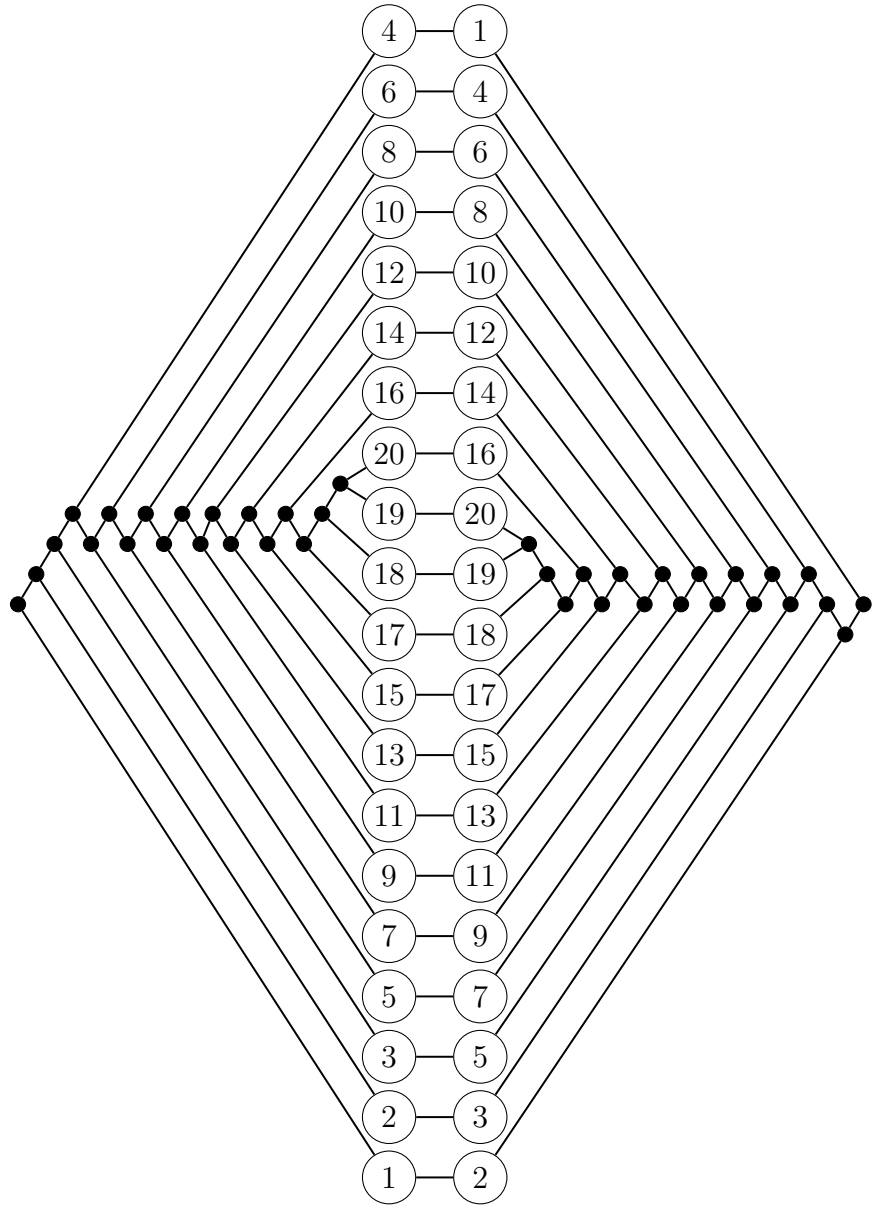
\begin{figure}[htbp]
		\centering
			\begin{tikzpicture}
			[scale=0.8,xscale=1,inner sep=2pt,
			vertex/.style={circle,draw,minimum height=7mm},
			invertex/.style={circle,draw,fill=black,minimum size=1mm},
			thickedge/.style={line width=0.75pt}] 
			\node[vertex] (a1) at (0,1) {$1$};
			\node[vertex] (c2) at (1.5,1) {$2$};			
			\node[vertex] (a2) at (0,2) {$2$};
			\node[vertex] (c3) at (1.5,2) {$3$};			
			\node[vertex] (a3) at (0,3) {$3$};
			\node[vertex] (c5) at (1.5,3) {$5$};			
			\node[vertex] (a4) at (0,20) {$4$};
			\node[vertex] (c1) at (1.5,20) {$1$};			
			\node[vertex] (a5) at (0,4) {$5$};
			\node[vertex] (c7) at (1.5,4) {$7$};					
			\node[vertex] (a6) at (0,19) {$6$};
			\node[vertex] (c4) at (1.5,19) {$4$};				
			\node[vertex] (a7) at (0,5) {$7$};
			\node[vertex] (c9) at (1.5,5) {$9$};					
			\node[vertex] (a8) at (0,18) {$8$};
			\node[vertex] (c6) at (1.5,18) {$6$};				
			\node[vertex] (a9) at (0,6) {$9$};
			\node[vertex] (c11) at (1.5,6) {$11$};						
			\node[vertex] (a10) at (0,17) {$10$};
			\node[vertex] (c8) at (1.5,17) {$8$};					
			\node[vertex] (a11) at (0,7) {$11$};
			\node[vertex] (c13) at (1.5,7) {$13$};						
			\node[vertex] (a12) at (0,16) {$12$};
			\node[vertex] (c10) at (1.5,16) {$10$};						
			\node[vertex] (a13) at (0,8) {$13$};
			\node[vertex] (c15) at (1.5,8) {$15$};						
			\node[vertex] (a14) at (0,15) {$14$};	
			\node[vertex] (c12) at (1.5,15) {$12$};				
			\node[vertex] (a15) at (0,9) {$15$};	
			\node[vertex] (c17) at (1.5,9) {$17$};						
			\node[vertex] (a16) at (0,14) {$16$};	
			\node[vertex] (c14) at (1.5,14) {$14$};					
			\node[vertex] (a17) at (0,10) {$17$};	
			\node[vertex] (c18) at (1.5,10) {$18$};				
			\node[vertex] (a18) at (0,11) {$18$};
			\node[vertex] (c19) at (1.5,11) {$19$};			
			\node[vertex] (a19) at (0,12) {$19$};
			\node[vertex] (c20) at (1.5,12) {$20$};			
			\node[vertex] (a20) at (0,13) {$20$};
			\node[vertex] (c16) at (1.5,13) {$16$};			
			
			\node[invertex] (b1) at (-.8,12.5) {}; 
			\node[invertex] (b2) at (-1.1,12) {};
			\node[invertex] (b3) at (-1.4,11.5) {};
			\node[invertex] (b4) at (-1.7,12) {};
			\node[invertex] (b5) at (-2,11.5) {};
			\node[invertex] (b6) at (-2.3,12) {};
			\node[invertex] (b7) at (-2.6,11.5) {};
			\node[invertex] (b8) at (-2.9,12) {};
			\node[invertex] (b9) at (-3.1,11.5) {};
			\node[invertex] (b10) at (-3.4,12) {};
			\node[invertex] (b11) at (-3.7,11.5) {};
			\node[invertex] (b12) at (-4,12) {};
			\node[invertex] (b13) at (-4.3,11.5) {};
			\node[invertex] (b14) at (-4.6,12) {};
			\node[invertex] (b15) at (-4.9,11.5) {};
			\node[invertex] (b16) at (-5.2,12) {};
			\node[invertex] (b17) at (-5.5,11.5) {};
			\node[invertex] (b18) at (-5.8,11) {};
			\node[invertex] (b19) at (-6.1,10.5) {};

			\node[invertex] (d1) at (2.3,11.5) {}; 
			\node[invertex] (d2) at (2.6,11) {};
			\node[invertex] (d3) at (2.9,10.5) {};
			\node[invertex] (d4) at (3.2,11) {};
			\node[invertex] (d5) at (3.5,10.5) {};
			\node[invertex] (d6) at (3.8,11) {};
			\node[invertex] (d7) at (4.2,10.5) {};
			\node[invertex] (d8) at (4.5,11) {};
			\node[invertex] (d9) at (4.8,10.5) {};
			\node[invertex] (d10) at (5.1,11) {};
			\node[invertex] (d11) at (5.4,10.5) {};
			\node[invertex] (d12) at (5.7,11) {};
			\node[invertex] (d13) at (6,10.5) {};
			\node[invertex] (d14) at (6.3,11) {};
			\node[invertex] (d15) at (6.6,10.5) {};
			\node[invertex] (d16) at (6.9,11) {};
			\node[invertex] (d17) at (7.2,10.5) {};
			\node[invertex] (d18) at (7.5,10) {};
			\node[invertex] (d19) at (7.8,10.5) {};
			
			\draw[thickedge] (b19)--(a1);
			\draw[thickedge] (b18)--(a2);
			\draw[thickedge] (b1)--(b2);
			\draw[thickedge] (b17)--(a3);
			\draw[thickedge] (b3)--(b2);
			\draw[thickedge] (b16)--(a4);
			\draw[thickedge] (b3)--(b4);
			\draw[thickedge] (b15)--(a5);
			\draw[thickedge] (b5)--(b4);
			\draw[thickedge] (b14)--(a6);
			\draw[thickedge] (b5)--(b6);
			\draw[thickedge] (b13)--(a7);
			\draw[thickedge] (b7)--(b6);
			\draw[thickedge] (b12)--(a8);
			\draw[thickedge] (b7)--(b8);
			\draw[thickedge] (b11)--(a9);
			\draw[thickedge] (b9)--(b8);
			\draw[thickedge] (b10)--(a10);
			\draw[thickedge] (b9)--(b10);
			\draw[thickedge] (b9)--(a11);
			\draw[thickedge] (b11)--(b10);
			\draw[thickedge] (b8)--(a12);
			\draw[thickedge] (b11)--(b12);
			\draw[thickedge] (b7)--(a13);
			\draw[thickedge] (b13)--(b12);
			\draw[thickedge] (b6)--(a14);
			\draw[thickedge] (b13)--(b14);
			\draw[thickedge] (b5)--(a15);
			\draw[thickedge] (b15)--(b14);
			\draw[thickedge] (b4)--(a16);
			\draw[thickedge] (b15)--(b16);
			\draw[thickedge] (b3)--(a17);
			\draw[thickedge] (b17)--(b16);
			\draw[thickedge] (b2)--(a18);
			\draw[thickedge] (b17)--(b18);
			\draw[thickedge] (b1)--(a19);
			\draw[thickedge] (b19)--(b18);
			\draw[thickedge] (b1)--(a20);

			\draw[thickedge] (d19)--(c1);
			\draw[thickedge] (d18)--(c2);
			\draw[thickedge] (d1)--(d2);
			\draw[thickedge] (d17)--(c3);
			\draw[thickedge] (d3)--(d2);
			\draw[thickedge] (d16)--(c4);
			\draw[thickedge] (d3)--(d4);
			\draw[thickedge] (d15)--(c5);
			\draw[thickedge] (d5)--(d4);
			\draw[thickedge] (d14)--(c6);
			\draw[thickedge] (d5)--(d6);
			\draw[thickedge] (d13)--(c7);
			\draw[thickedge] (d7)--(d6);
			\draw[thickedge] (d12)--(c8);
			\draw[thickedge] (d7)--(d8);
			\draw[thickedge] (d11)--(c9);
			\draw[thickedge] (d9)--(d8);
			\draw[thickedge] (d10)--(c10);
			\draw[thickedge] (d9)--(d10);
			\draw[thickedge] (d9)--(c11);
			\draw[thickedge] (d11)--(d10);
			\draw[thickedge] (d8)--(c12);
			\draw[thickedge] (d11)--(d12);
			\draw[thickedge] (d7)--(c13);
			\draw[thickedge] (d13)--(d12);
			\draw[thickedge] (d6)--(c14);
			\draw[thickedge] (d13)--(d14);
			\draw[thickedge] (d5)--(c15);
			\draw[thickedge] (d15)--(d14);
			\draw[thickedge] (d4)--(c16);
			\draw[thickedge] (d15)--(d16);
			\draw[thickedge] (d3)--(c17);
			\draw[thickedge] (d17)--(d16);
			\draw[thickedge] (d2)--(c18);
			\draw[thickedge] (d17)--(d18);
			\draw[thickedge] (d1)--(c19);
			\draw[thickedge] (d19)--(d18);
			\draw[thickedge] (d1)--(c20);

			\draw[thickedge] (a1)--(c2);
			\draw[thickedge] (a2)--(c3);
			\draw[thickedge] (a3)--(c5);
			\draw[thickedge] (a4)--(c1);
			\draw[thickedge] (a5)--(c7);
			\draw[thickedge] (a6)--(c4);
			\draw[thickedge] (a7)--(c9);
			\draw[thickedge] (a8)--(c6);
			\draw[thickedge] (a9)--(c11);
			\draw[thickedge] (a10)--(c8);
			\draw[thickedge] (a11)--(c13);
			\draw[thickedge] (a12)--(c10);
			\draw[thickedge] (a13)--(c15);
			\draw[thickedge] (a14)--(c12);
			\draw[thickedge] (a15)--(c17);
			\draw[thickedge] (a16)--(c14);
			\draw[thickedge] (a17)--(c18);
			\draw[thickedge] (a18)--(c19);
			\draw[thickedge] (a19)--(c20);
			\draw[thickedge] (a20)--(c16);
			
			\end{tikzpicture}
	    \caption{A planar drawing of $\mathcal{T}_{\rho_4}$ as described in Proposition~\ref{claim:planardraw}~\ref{part:describe}.}
	\label{fig:planar}
\end{figure}

Just having a proof that $\mathcal{T}_{\rho_i}$ is planar is somewhat unsatisfactory; one  naturally wants to see a planar layout of of this catergram.

First note that given a plane tree $P$ of any rooted binary tree $T$ with $n$ unique labeled leaves, 
the drawing of $P$ gives an ordering $(\ell_1,\ldots,\ell_n)$ of the labels by the order they appear on their line in the drawing.
Moreover, if $v$ is an internal vertex of $T$, then the set of leaves that are descendants of $v$,  i.e. the leaves separated by $v$ from the root, 
 must appear in a contiguous block of $(\ell_1,\ldots,\ell_n)$. It is easy to see that if  $(\ell_1,\ldots,\ell_n)$ is an ordering of the leaf labels
such that for every internal vertex $v$ of $T$ the leaves that are descendants of $v$ appear in a continuous block of $(\ell_1,\ldots,\ell_n)$, then there
is precisely one plane tree $P$ of $T$ that puts the leaves in the order $(\ell_1,\ldots,\ell_n)$ on its line of leaves.

If $v$ is an internal vertex of the caterpillar $C_n$ whose leaves are labeled according to our distance   convention, then there is an $i\in[n]$ such that
the set of leaves that are descendants of $v$ are exactly the leaves labeled with entries that are at least $i$. Therefore a permutation
$(\ell_1,\ldots,\ell_n)\in S_n$ arises  from a plane tree of $C_n$ precisely when for every $i\in[n]$, the entries bigger than $i$ appear only one side (left or right)
of $i$ in $(\ell_1,\ldots,\ell_n)$.

\begin{dmydefs} 
Given a  rooted binary tree $T$ on $n$ leaves, which are labeled by the elements of $[n]$,  we call a permutation $(\ell_1,\ldots,\ell_n)\in S_n$  {\textit consistent} with $T$,
 if for every internal vertex $v$  of $T$, then the set of leaves that are descendants of $v$ appear in a contiguous segment  of $(\ell_1,\ldots,\ell_n)$.
A permutation $(\ell_1,\ldots,\ell_n)$ is {\textit cater-good}, if it is consistent with the distance labeled caterpillar $C_n$  \emph{(see Definition~\ref{convention})}, i.e.
for every $i\in[n]$, the entries bigger than $i$ appear only one side (left or right)
of $i$ in $(\ell_1,\ldots,\ell_n)$.
\end{dmydefs}

\begin{proposition}\label{claim:planardraw}
The following facts are obvious:
\begin{enumerate}[label={\upshape (\alph*)}]
\item The tanglegram $(T_1,T_2,M)$, where the leaves of $T_1$ and $T_2$ are labeled, is planar iff there are permutations
$\pi_1=(a_1,\ldots,a_n)$ and $\pi_2=(b_1,\ldots,b_n)$ of the leaf labels of $T_i$, such that $\pi_i$ is consistent with $T_i$ 
for $i=1,2$,
and $M=\{a_ib_i:i\in[n]\}$.
\item The catergram $T_{\sigma}$ is planar iff there is cater-good a permutation $(a_1,\ldots,a_n)$ such that
$(\sigma(a_1),\ldots,\sigma(a_n))$ is also cater-good. A planar layout is obtained by these permutation, putting leaves in their order on the lines $x=0$ and $x=1$.
\item If a permutation $(c_1,\ldots,c_n)$ of $[n]$ is unimodal, then it is cater-good.
\item\label{part:describe} For every $i\in\mathbb{Z}^+$, a planar drawing of $\mathcal{T}_{\rho_i}$ is given by the permutation $(a_1,\ldots,a_{12+2i})$ where $a_1=4$, $(a_1,a_2,a_3)=(1,2,3)$, $(a_{8+i},a_{9+i},a_{10+i},a_{11+i})=(9+2i,10+2i,11+2i,12+2i)$, and for $j\in[4+i]$,
$a_{3+j}=3+2j$ and $a_{12+2i-j}=4+2j$. 
\end{enumerate}
\end{proposition}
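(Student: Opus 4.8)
The four statements build on one another, so the plan is to establish them in the listed order and let each feed the next; everything except (d) is immediate from the definitions, and the only genuinely computational part is (d).

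\emph{Parts (a) and (b).} For (a) I would unwind the definition of a layout together with the correspondence, recorded in the paragraph preceding the definition of \emph{consistent}, between plane trees of a labelled rooted binary tree $T$ and the permutations of its leaf labels that are consistent with $T$. A layout then amounts to a choice of a consistent ordering $\pi_1=(a_1,\dots,a_n)$ of the left leaves on the line $x=0$ and a consistent ordering $\pi_2=(b_1,\dots,b_n)$ of the right leaves on $x=1$. Since the matching edges are straight segments between two parallel lines, two of them cross exactly when their endpoints occur in opposite vertical order on the two lines; hence the layout is crossing-free if and only if the $p$-th leaf from the bottom on the left is matched to the $p$-th leaf from the bottom on the right for every $p$, i.e. $M=\{a_pb_p:p\in[n]\}$. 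This is precisely (a). Part (b) is the specialization of (a) to $\mathcal{T}_\sigma=(C_n,C_n,M_\sigma)$: here both trees carry the distance labelling, so ``consistent with $C_n$'' coincides with ``cater-good'', and the partner of the left label $a_p$ under $M_\sigma$ is $\sigma(a_p)$; thus $\pi_2=(\sigma(a_1),\dots,\sigma(a_n))$, and (a) says exactly that planarity is equivalent to both $(a_1,\dots,a_n)$ and $(\sigma(a_1),\dots,\sigma(a_n))$ being cater-good.

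\emph{Part (c).} I would fix a unimodal $(c_1,\dots,c_n)$ and an arbitrary value $i$. In a unimodal sequence the set of positions carrying a value exceeding $i$ is an interval: it contains the peak and extends to each side as long as the entries stay above $i$. The position of the entry $i$ itself does not lie in this interval, so it sits entirely to its left or entirely to its right; consequently every entry larger than $i$ lies on one side of $i$. As $i$ was arbitrary, $(c_1,\dots,c_n)$ is cater-good.

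\emph{Part (d).} By (b) it suffices to exhibit a single cater-good ordering $(a_1,\dots,a_{12+2i})$ whose image $(\rho_i(a_1),\dots,\rho_i(a_{12+2i}))$ is again cater-good; the ordering displayed in (d), drawn for $i=4$ in Figure~\ref{fig:planar}, is the natural candidate. First I would check that this ordering is \emph{unimodal}: it rises through $1,2,3$ and then the odd values up to $9+2i$, continues up through $10+2i,11+2i,12+2i$ to the global maximum $12+2i$, and then falls through the even values $8+2i,6+2i,\dots,4$; by (c) it is cater-good. Next I would apply $\rho_i$ entrywise, tracking its three regimes (the initial block $1,2,3,4\mapsto 2,3,5,1$; the ``shift by $\pm2$'' on the range $[5,8+2i]$; and the terminal block on $9+2i,\dots,12+2i$). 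A direct computation gives the image $(2,3,5,7,\dots,9+2i,10+2i,11+2i,12+2i,8+2i,6+2i,\dots,4,1)$, which again rises to the maximum $12+2i$ and then falls, hence is unimodal and, by (c), cater-good. Invoking (b) once more yields that $\mathcal{T}_{\rho_i}$ is planar, realized by this layout.

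\emph{Main obstacle.} The only delicate point is the entrywise computation of $\rho_i$ on the chosen ordering. One must keep careful track of which entries fall under each of the three regimes of $\rho_i$---in particular that $9+2i$ is odd yet belongs to the \emph{terminal} block (so $\rho_i(9+2i)=10+2i$, not $11+2i$), and that $4$ is handled by the initial block---and then confirm that no interior entry spoils monotonicity on either side of the maximum. Organizing both checks around the single invariant ``the ordering climbs to the global maximum $12+2i$ and then descends'' is what keeps the verification, before and after applying $\rho_i$, short and transparent.
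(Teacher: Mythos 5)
Your proposal is correct and takes essentially the same approach as the paper: the paper likewise treats (a)--(c) as immediate from the layout/consistent-ordering correspondence, and justifies (d) by noting the displayed ordering is unimodal and that its image under $\rho_i$ is the unimodal cyclic shift $(a_2,a_3,\ldots,a_{12+2i},1)$ --- precisely the sequence your entrywise computation produces. (You also, sensibly, worked from the ordering in Figure~\ref{fig:planar} rather than the statement's garbled index formulas such as ``$a_1=4$'' alongside $(a_1,a_2,a_3)=(1,2,3)$.)
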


Note that the permutation $(a_1=1,\ldots,a_{12+2i})$ in~\ref{part:describe} is unimodal, and consequently so is $(\rho_i(a_1),\ldots,\rho_i(a_{12+2i}))=(a_2,a_3\ldots,a_{12+2i},1)$.
Figure~\ref{fig:planar} gives the planar drawing of $\mathcal{T}_{\rho_4}$ determined by the permutation given in this Proposition.

\end{document}